\numberwithin{equation}{section}
\theoremstyle{plain}
\newtheorem{thm}{Theorem}[section]
\theoremstyle{definition}
\newtheorem{exmp}{Example}[section]
\begin{document}
\title[Nilpotency of Elementary Operators on $B(E)$] {Nilpotency of Elementary Operators on $B(E)$}
\author[Gyan Prakash Tripathi and Nand Lal ]{Gyan Prakash Tripathi$^{*,1}$ and Nand Lal\\
         Department of Mathematics\\
       SGR PG College, Dobhi\\
        Jaunpur -- 222149,India\\
        1/89 Vinay Khand,Gomati Nagar \\
        Lucknow-222460,India}
\date{}
\keywords {Commuting families, elementary operators, generalized derivations, nilpotency.}
\subjclass[2000]{47B47}
\thanks{$^*$E-mail: gptbhu@yahoo.com}

\begin{abstract}
 In this paper, we shall give a necessary and sufficient condition for nilpotency of elementary multiplication operators and some sufficient conditions for elementary operators to be nilpotent on $B(E)$, where E is Banach space.
\end{abstract}
\maketitle

\section{Introduction}
\label{sec:intr}
Let $B(E)$ be the algebra of all bounded linear operators on a Banach 
space $E$, and let $A= (A_1, A_2, \ldots , A_n)$ and $B = (B_1, B_2, \ldots , B_n)$
be two $n$-tuples 
of operators in $B(E)$. The elementary operator $R_{A,B}$ associated 
with $A$ and $B$ is the operator on $B(E)$ into itself defined by
$$R_{A,B} (X) = A_1XB_1 + A_2 XB_2 + \ldots + A_n XB_n$$
for all   $X \in B(E)$ (see \cite{FIAL}).

For $A$ and $B$ in $B(E)$, by $M_{A,B}$ we denote elementary multiplication operator defined by $M_{A,B} (X) = AXB$ for all $X \in B(E)$. This can also be seen as elementary operator of length one. For  $A, B \in B(E),$ inner derivation $\delta_A$ on $B(E)$ into itself is defined by  $\delta_A (X) = AX - XA$ and generalized derivation $\delta_{A,B}$ on $B(E)$ into itself is defined by $\delta_{A,B} (X) = AX - XB$ for all $X \in B(E)$. It is easy to see that generalized derivation and inner derivation are particular cases of elementary operators. 
	
In 1979, Fong and Sourour \cite{FS} investigated the compactness of elementary operators. By using that result they gave the following characterization of nilpotency of generalized derivations. 
\begin{thm}
\cite{FS} Let $X$ be an infinite dimensional Banach space and $S, T \in B(X)$, then the following are equivalent: 
\begin{enumerate}
\item $\delta_{S,T}$ is nilpotent. 
\item There exist a positive integer $n$ such that $\delta_{S,T}^n$ is a compact operator. 
\item There exist a scaler $\lambda$ such that $S-\lambda I$ and $T-\lambda I$ are nilpotent. 
\end{enumerate}
\end{thm}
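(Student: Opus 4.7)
The plan is to argue cyclically in the order $(1)\Rightarrow(2)\Rightarrow(3)\Rightarrow(1)$. The first implication is trivial, as the zero operator is compact. For $(3)\Rightarrow(1)$ I would observe that $\delta_{S,T}=\delta_{S-\lambda I,\,T-\lambda I}$ for every scalar $\lambda$, since the two middle $\lambda Y$-terms cancel for any $Y\in B(X)$. Setting $A=S-\lambda I$ and $B=T-\lambda I$, the left- and right-multiplications $L_A,R_B$ on $B(X)$ commute, so a binomial expansion yields
\[
\delta_{S,T}^{n}=(L_A-R_B)^n=\sum_{k=0}^{n}\binom{n}{k}(-1)^{n-k}L_{A^{k}}R_{B^{n-k}}.
\]
If $A^p=0$ and $B^q=0$, then for $n=p+q-1$ every summand vanishes (since either $k\ge p$ or $n-k\ge q$), so $\delta_{S,T}^{p+q-1}=0$.

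For the substantive direction $(2)\Rightarrow(3)$, assume $\delta_{S,T}^n$ is compact. Since $B(X)$ is infinite-dimensional, $\sigma(\delta_{S,T}^n)$ is at most countable with $0$ as its only possible accumulation point; by the spectral mapping theorem, $\sigma(\delta_{S,T})$ inherits the same property. Combined with the classical identity $\sigma(\delta_{S,T})=\sigma(S)-\sigma(T)$, a short topological argument on compact subsets of $\mathbb{C}$ forces both $\sigma(S)$ and $\sigma(T)$ to be finite: fixing any $t\in\sigma(T)$, the translate $\sigma(S)-t\subseteq\sigma(\delta_{S,T})$ accumulates only at $0$, so $\sigma(S)$ accumulates only at $t$; if $\sigma(T)$ contained two distinct points, $\sigma(S)$ would have no accumulation at all and hence be finite. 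With both spectra finite I would invoke Riesz decompositions $X=X_1\oplus\cdots\oplus X_r$ (for $S$) and $X=Y_1\oplus\cdots\oplus Y_s$ (for $T$). Each block $B(Y_j,X_i)$ is $\delta_{S,T}$-invariant, and the restriction is a generalized derivation $\delta_{S_i,T_j}$ of spectrum $\{\alpha_i-\beta_j\}$; since $X$ is infinite-dimensional, some block is infinite-dimensional, and on it compactness of $\delta_{S,T}^{n}$ rules out $\alpha_i\neq\beta_j$, as invertible compact operators do not exist in infinite dimension. Propagating this observation across all infinite-dimensional blocks then forces $\sigma(S)=\sigma(T)=\{\lambda\}$ for a common scalar $\lambda$.

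The last, delicate step is to upgrade the resulting quasi-nilpotency of $A=S-\lambda I$ and $B=T-\lambda I$ to genuine nilpotency. Evaluating $\delta_{A,B}^{n}$ on a rank-one operator $u\otimes\phi$ (with $u\in X$ and $\phi\in X^{*}$) gives
\[
\delta_{A,B}^{n}(u\otimes\phi)=\sum_{k=0}^{n}\binom{n}{k}(-1)^{n-k}(A^{k}u)\otimes(B^{*\,n-k}\phi).
\]
Compactness of $\delta_{A,B}^n$ forces the image of any bounded family of such rank-one operators to be relatively compact in $B(X)$; varying $u$ and $\phi$ and appealing to the Fong--Sourour compactness theorem for elementary operators, one extracts the vanishing $A^{n}=B^{n}=0$. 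This final extraction of honest nilpotency from compactness of a single power of $\delta_{S,T}$, which is precisely where the infinite-dimensionality of $X$ plays its essential role and where the Fong--Sourour structure theorem for compact elementary operators is indispensable, is the main obstacle of the proof.
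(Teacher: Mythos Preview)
The paper does not contain a proof of this theorem at all: it is quoted verbatim from Fong--Sourour \cite{FS} as background (note the \texttt{\textbackslash cite\{FS\}} immediately after the theorem head), and is then only \emph{used}, in the proof of Theorem~2.3, via the implication $(3)\Rightarrow(1)$. So there is no ``paper's own proof'' to compare your proposal against.

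As a standalone argument, your $(1)\Rightarrow(2)$ and $(3)\Rightarrow(1)$ are correct and standard. The direction $(2)\Rightarrow(3)$, however, has real gaps. First, your reduction to finite spectra is incomplete: your dichotomy ``if $\sigma(T)$ has two points then $\sigma(S)$ is finite'' leaves open the case where one spectrum is a singleton and the other is infinite but accumulates at a single point, and you never close that case before invoking Riesz decompositions. Second, the ``propagation'' from infinite-dimensional blocks to $\sigma(S)=\sigma(T)=\{\lambda\}$ can be made to work, but it needs the observation that $B(Y_j,X_i)$ is infinite-dimensional whenever \emph{either} factor is, together with a short bootstrapping; as written it is an assertion, not an argument. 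Third, and most seriously, the final step---upgrading quasi-nilpotency of $A=S-\lambda I$ and $B=T-\lambda I$ to genuine nilpotency---is precisely the heart of the Fong--Sourour theorem, and you dispatch it by ``appealing to the Fong--Sourour compactness theorem for elementary operators.'' That is effectively circular: their structure theorem for compact elementary operators is exactly what drives this implication in \cite{FS}, and you have not reproduced any of its content. The rank-one formula you display is correct but does not by itself yield $A^n=0$ or $B^n=0$ from relative compactness of the image family.
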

\section{Results}

In this section,we shall give a necessary and sufficient condition for nilpotency of elementary operators and give some sufficient conditions for elementary operators to be nilpotent. 

\begin{thm}
The elementary multiplication operator $M_{A,B}(X) = AXB$ for all $X \in B(E)$, is nilpotent if and only if either $A$ or $B$ is nilpotent. 
\end{thm}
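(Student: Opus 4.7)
The plan is to reduce the nilpotency of $M_{A,B}$ to the nilpotency of the powers $A^k$ and $B^k$, and then use rank-one operators as test operators to force one of these powers to vanish.

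First, I would establish by an easy induction that $M_{A,B}^k(X) = A^k X B^k$ for every positive integer $k$ and every $X \in B(E)$; this uses nothing beyond the definition (no commutativity assumption is needed). The ``if'' direction then follows immediately: if $A^k = 0$ or $B^k = 0$, then $M_{A,B}^k(X) = A^k X B^k = 0$ for all $X$, so $M_{A,B}$ is nilpotent.

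For the ``only if'' direction I would argue by contrapositive. Assume neither $A$ nor $B$ is nilpotent, so $A^k \neq 0$ and $B^k \neq 0$ for every $k \geq 1$. Fix $k$. Choose $x \in E$ with $A^k x \neq 0$, choose $v \in E$ with $B^k v \neq 0$, and by the Hahn--Banach theorem choose $f \in E^*$ with $f(B^k v) \neq 0$. Consider the rank-one operator $X \in B(E)$ defined by $X(u) = f(u)\,x$. A direct computation gives
\[
  \bigl(A^k X B^k\bigr)(v) \;=\; A^k\bigl(f(B^k v)\,x\bigr) \;=\; f(B^k v)\,A^k x \;\neq\; 0,
\]
so $M_{A,B}^k(X) \neq 0$. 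Since $k$ was arbitrary, $M_{A,B}$ is not nilpotent, completing the contrapositive.

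The only nontrivial step is choosing the test operator $X$; the remaining work is bookkeeping. The main (mild) obstacle is recognizing that rank-one operators built from a vector witnessing $A^k \neq 0$ and a functional witnessing $B^k \neq 0$ are enough to expose nonzero action of $M_{A,B}^k$, and that Hahn--Banach is precisely what supplies the required functional in the Banach-space setting (as opposed to the Hilbert-space case where one could just take an inner product).
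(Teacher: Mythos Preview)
Your proposal is correct and follows essentially the same approach as the paper: both compute $M_{A,B}^k(X)=A^kXB^k$, and for the nontrivial direction both feed in a rank-one operator $X=f\otimes x$ with $f$ supplied by Hahn--Banach so that $f(B^k v)\neq 0$, yielding $A^kXB^k\neq 0$ (or, in the paper's direct formulation, forcing $A^n x=0$ for all $x$). The only cosmetic difference is that the paper assumes $M_{A,B}^n=0$ and shows one of $A^n,B^n$ vanishes, whereas you argue by contrapositive; the underlying computation is identical.
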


\begin{proof}
We have $M_{A,B} (X) = AXB$ for all $X \in B(E)$ so $M_{A,B}^n (X) = A^n XB^n$. It is easy to see that $M_{A,B}$ is nilpotent if either $A$ or $B$ is nilpotent. 

Conversely, let $M_{A,B}^n = 0$. Now  $M_{A,B}^n (X) = A^n XB^n = 0$ for all $X \in B(E)$. If $B$ is nilpotent then theorem is done otherwise $B^n z \ne 0$ for some $z \in E$. By consequence of Hahn - Banach theorem there exist a linear functional f such that $f(B^nz)\neq 0.$ Let $f \otimes x$ be rank one operator on $E$ defined by ($f \otimes x) (z) = f(z)x$, where x is a nonzero vector in $E$ (see \cite{MUR}). 

Now 		
\begin{eqnarray*}
		A^n (f \otimes x) B^n = 0&&\Rightarrow A^n (f \otimes x) B^n z = 0\\
		&&\Rightarrow f(B^nz)A^nx = 0\\
				&& \Rightarrow  A^n x = 0.
\end{eqnarray*}
		
If we vary $x$ throughout $E$, then $A^n x = 0$ for all $x \in E$. Hence $A$ is nilpotent. 
\end{proof}

\begin{thm}
Let $R_{A,B}$ be an elementary operator on $B(E)$, where 
$A = (A_1, A_2, \ldots, A_n)$  and $B=(B_1, B_2, \ldots, B_n)$ are $n$-tuples of commuting families in $B(E)$. Then $R_{A,B}$ is nilpotent if either  $A_i$ or $B_i$ is nilpotent for each  $1 \le i \le n$.
\end{thm}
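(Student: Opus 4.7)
The plan is to expand $R_{A,B}^N$ in closed form using the commutativity hypothesis, and then deploy a pigeonhole argument on the multi-index of each summand.

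First, I would iterate the definition of $R_{A,B}$:
\[
R_{A,B}^N(X) \;=\; \sum_{i_1,\ldots,i_N=1}^{n} A_{i_N}\cdots A_{i_1}\, X\, B_{i_1}\cdots B_{i_N}.
\]
Because $\{A_1,\ldots,A_n\}$ is a commuting family and so is $\{B_1,\ldots,B_n\}$, within each summand I can freely reorder the $A$-factors and the $B$-factors. Collecting summands according to the multi-index $(k_1,\ldots,k_n)$ where $k_i$ counts how often the index $i$ appears, I get the multinomial-style expansion
\[
R_{A,B}^N(X) \;=\; \sum_{k_1+\cdots+k_n=N} \binom{N}{k_1,\ldots,k_n}\, A_1^{k_1}\cdots A_n^{k_n}\, X\, B_1^{k_1}\cdots B_n^{k_n}.
\]

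Next, for each $i$ let $m_i$ be a positive integer such that $A_i^{m_i}=0$ or $B_i^{m_i}=0$; such $m_i$ exists by hypothesis. I claim that the choice
\[
N \;=\; \Bigl(\sum_{i=1}^{n} m_i\Bigr) - n + 1
\]
forces $R_{A,B}^N = 0$. Indeed, fix any multi-index $(k_1,\ldots,k_n)$ with $k_1+\cdots+k_n = N$. If we had $k_i \le m_i - 1$ for every $i$, then summing would give $N \le \sum_i(m_i-1) = N-1$, which is impossible. Hence there exists some index $i_0$ with $k_{i_0} \ge m_{i_0}$, so either $A_{i_0}^{k_{i_0}}=0$ or $B_{i_0}^{k_{i_0}}=0$. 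Using the commutativity of the corresponding family to bring this factor to the outside, the entire summand vanishes.

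There is really no obstacle here beyond justifying the rearrangement; the only place commutativity is used in an essential way is to pass from the ordered sum to the multinomial form, and to later group the nilpotent factor. Once the closed-form expansion is on the page, the pigeonhole step is immediate and completes the proof.
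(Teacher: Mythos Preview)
Your argument is correct but takes a different route from the paper. The paper proceeds by induction on $n$: the base case $n=1$ is the elementary multiplication operator handled by Theorem~2.1, and for the inductive step one writes $R_{A,B} = R'_{A,B} + M_{A_{m+1},B_{m+1}}$, observes that these two summands commute (because the $A_i$'s commute among themselves and likewise the $B_i$'s), notes that each is nilpotent by the induction hypothesis and Theorem~2.1 respectively, and then invokes the standard fact that a sum of two commuting nilpotent operators is nilpotent. Your direct multinomial expansion with a pigeonhole argument is, in effect, an unrolled version of that induction: the binomial expansion hidden inside the ``sum of commuting nilpotents'' lemma becomes your single multinomial identity. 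What your approach buys is an explicit nilpotency index $N = \sum_{i=1}^{n} m_i - n + 1$, whereas the paper's inductive argument is shorter and more structural but leaves the bound implicit.
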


\begin{proof}
We denote $R_{AB} (X) = \sum_{i = 1}^n A_i XB_i,$ for all $X\in B(E).$
	
	For $n=1$, $R_{A,B}$ is an elementary multiplication operator. Therefore statement is true by Theorem $2.1$ 
	
Suppose statement is true for $n = m$. 

Now we show that statement is true for $n = m + 1$ also. 

Let 	

\begin{eqnarray*}
R_{A,B} (X) &=& \sum_{i = 1}^{m + 1} A_i XB_i\\
						& = & \sum_{i = 1}^m A_i XB_i + A_{m + 1} XB_{m + 1}
\end{eqnarray*}
		  
Suppose $R_{A,B}' (X) = \sum_{i = 1}^m A_i XB_i$ and $M_{A_{m+1}, B_{m + 1}} (X) =
A_{m + 1} XB_{m + 1}$. It follows that $R_{A,B} = R_{A,B}' +$ $M_{A_{m + 1}, B_{m + 1}}.$ Since $A_{i}\,^{'}s$ and  $B_{i}\,{'} s$ are commuting, it is easy to see 
that $R_{A,B}' M_{A_{m + 1}, B_{m + 1}} = M_{A_{m + 1}, B_{m + 1}} R_{A,B}'$.  But $R_{A,B}'$ and
$M_{A_{m + 1}, B_{m + 1}}$ are nilpotent. Therefore $R_{A,B} (X) = \sum_{i = 1}^{m + 1}
A_i XB_i$  is nilpotent. 
\end{proof}

\begin{thm}
 Let  $V_{A,B} (X) = AXB - BXA$ be an elementary operator on $B(H)$ into itself, 
 where $AB = BA$. If there exist $\lambda, \mu \in \mathbb{C}$, such that $A - \lambda I$ and 
 $B - \mu I$ are nilpotent then $V_{A,B}$ is nilpotent. 
\end{thm}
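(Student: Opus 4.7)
The plan is to reduce the claim to Theorem 2.2 by splitting off the nilpotent pieces of $A$ and $B$. Set $A' := A - \lambda I$ and $B' := B - \mu I$; since $A$ and $B$ commute, so do $A'$ and $B'$, and by hypothesis both are nilpotent.

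First I would substitute $A = A' + \lambda I$ and $B = B' + \mu I$ into $V_{A,B}(X) = AXB - BXA$ and expand. The two $\lambda\mu X$ terms cancel, and regrouping the remaining six terms yields
\begin{equation*}
V_{A,B}(X) = A'XB' - B'XA' + \mu A'X - \mu XA' - \lambda B'X + \lambda XB'.
\end{equation*}
This exhibits $V_{A,B}$ as an elementary operator $R_{C,D}$ of length six with left coefficients $C = (A',\, -B',\, \mu A',\, -\mu I,\, -\lambda B',\, \lambda I)$ and right coefficients $D = (B',\, A',\, I,\, A',\, I,\, B')$.

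Next I would verify the hypotheses of Theorem 2.2 for this representation. Because $A'$ and $B'$ commute with each other and with $I$, the entries of $C$ form a commuting family and so do the entries of $D$. Moreover, in each coordinate $i$ at least one of $C_i, D_i$ is a scalar multiple of $A'$ or $B'$, hence nilpotent; the identity $I$ appears only in positions paired with $A'$ or $B'$. Theorem 2.2 then delivers the nilpotency of $V_{A,B}$.

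The real content is thus the algebraic rearrangement into standard elementary-operator form; the main obstacle is keeping the signs and pairings straight so that both coefficient tuples end up as commuting families with the Theorem 2.2 nilpotency condition holding coordinatewise. Everything else follows mechanically from $AB = BA$ together with the nilpotency of $A'$ and $B'$.
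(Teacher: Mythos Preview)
Your argument is correct. The expansion is accurate, and the length-six elementary operator you write down does satisfy the hypotheses of Theorem~2.2: the left tuple and the right tuple each lie in the commutative algebra generated by $A'$, $B'$, $I$, and in every coordinate at least one side is a scalar multiple of $A'$ or $B'$, hence nilpotent.

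Your route differs from the paper's. The paper does not expand all the way to a length-six operator; instead it groups the terms as
\[
V_{A,B} \;=\; V_{A-\lambda I,\,B-\mu I} \;+\; \bigl(\mu\,\delta_A - \lambda\,\delta_B\bigr),
\]
argues separately that each summand is nilpotent (the first via Theorem~2.2, the second via Theorem~1.1 together with $\delta_A\delta_B=\delta_B\delta_A$), and then checks that the two summands commute, so their sum is nilpotent. Your approach absorbs everything into one application of Theorem~2.2 and never needs to invoke Theorem~1.1 or the separate ``sum of commuting nilpotents'' step; in that sense it is more self-contained. The paper's decomposition, on the other hand, keeps the structural pieces $V_{A',B'}$ and $\mu\delta_A-\lambda\delta_B$ visible, which makes the connection to generalized derivations explicit.
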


\begin{proof}
We have  $V_{A,B} (X) = AXB - BXA$. It is easy to see that
\begin{equation}\label{eq:one}
V_{A - \lambda I, B - \mu I} (X) = V_{A,B} (X) + \lambda \delta_B (X) - \mu \delta_A (X)
\end{equation}
\par
	Since $A-\lambda I$ and $B-\mu I$ are nilpotent, $\delta_A$ and $\delta_B$ are nilpotent follows by Theorem $1.1$. Further $\delta_A \delta_B = \delta_B \delta_A$. Therefore $\lambda
	\delta_B - \mu \delta_A$  is nilpotent. Note that $V_{A - \lambda I, B - \mu I}$ is nilpotent by Theorem 2.2. Since $AB = BA$, it is easy to see that 
	
	$V_{A - \lambda I, B - \mu I}
	(\mu \delta_A - \lambda \delta_B) = (\mu \delta_A - \lambda \delta_B)
	V_{A - \lambda I, B - \mu I}$.
			Therefore from equation (\ref{eq:one}) $V_{A,B} =
	V_{A - \lambda I, B - \mu I} + \mu \delta_A - \lambda \delta_B$ is nilpotent. 
\end{proof}
\section{examples}

\begin{exmp}
Let $\mathcal{A} = M_2 (\mathbb{C})$ be the complex algebra of all $2 \times 2$ matrices  and 
	let \begin{center}$A = \begin{bmatrix}
						0 & &1\\
						0 & &0
						\end{bmatrix}$, $B = \begin{bmatrix}
																			0 & &0 \\
																			1 & &0
																			\end{bmatrix}$.
	\end{center}																		
																			
Note that $AB \ne BA$, $A^2 = B^2 = 0$, $ABA = A$ and $BAB = B$. Consider $V_{A,B} (X) = AXB - BXA$, we get $V_{A,B}^3 = - V_{A,B}$.\\But $V_{A,B} (X) = \begin{bmatrix}
											\mathbf{x}_{22} & 0\\
											0		& \mathbf{-x}_{11}
											\end{bmatrix}$ is a diagonal operator for every $X \in M_2 (\mathbb{C})$, where   
 $X = \begin{bmatrix}
 				\mathbf{x}_{11} & \mathbf{x}_{12}\\
 				\mathbf{x}_{21} & \mathbf{x}_{22}
 			\end{bmatrix}$. Therefore  $V_{A,B}^3 = - V_{A,B} \ne 0$. Hence $V_{A,B}$ is not nilpotent. 
	
	Above example shows that commutativity of $A$ and $B$ cannot be relaxed in Theorem 2.2 and Theorem 2.3. 
	
\end{exmp}

\begin{exmp}
Let $\mathcal{A} = M_3 (\mathbb{C})$ be the complex algebra of all $3 \times 3$ matrices.  
	Suppose  $A = \begin{bmatrix}
									a & b & 1\\
									c & d & 1 \\
									0 & 0 & k
								\end{bmatrix}$
	and  $B = \begin{bmatrix}
						a & b & 0\\
						c & d & 0\\
						0 & 0 & k
						\end{bmatrix}$	    
		such that $a + b = c + d = k \ne 0, b + c\neq 0$.
		
Note that $AB = BA$ and $N = A-B$ is nilpotent. Consider $V_{A,B}(X) = AXB - BXA$, we have $V_{A,B}(X) = NXB - BXN$. Thus $V_{A,B}$ is nilpotent  by Theorem 2.2 but none of $A$ or $B$ is nilpotent. This shows that condition of Theorem 2.1 cannot be extended to elementary operators of length 2. This example also shows that converse of Theorem $2.2$ is not true. Further note that $A - \lambda I$  and $B - \mu I$ are not nilpotent for any $\lambda, \mu \in C$. This shows that converse of Theorem $2.3$ is not true.
\end{exmp}
\textbf {Acknowledgemant}:\\
Research work of first author is supported by CSIR Award No.9/13(951)/2000-EMR-I Dated 19.12.2002. The helpful suggestions of referee are also gratefully acknowledged. 
\newpage

\end{document}